\newcommand{\Z} {\mathbb{Z}}
\newcommand{\U}{{\cal U}}
\newcommand{\bc}{\begin{center}}
\newcommand{\ec}{\end{center}}
\newcommand{\be}{\begin{enumerate}}
\newcommand{\ee}{\end{enumerate}}
\newcommand{\bd}{\begin{description}}
\newcommand{\ed}{\end{description}}
\newtheorem{bdf}{Definition}[section]
\newtheorem{bth}[bdf]{Theorem}
\newtheorem{bco}[bdf]{Corolarry}
\newtheorem{ble}[bdf]{Lemma}
\newtheorem{bpr}[bdf]{Proposition}
\newcommand{\bob}{\begin{Ob}}
\newcommand{\eob}{\end{Ob}}
\newcommand{\qed}{\enspace\vrule  height6pt  width4pt  depth2pt}
\newenvironment{proof}{\par\noindent{\bf Proof.}}{$\qed$\par\bigskip}
\begin{document}

\title{Traces of Torsion Units}
\author{Juriaans, S. O. \thanks{ Research supported by CNPq-Brazil.
} \and De A. E Silva, A.
\and Souza Filho, A. C.  \thanks{ Research supported by FAPESP-Brazil.}}
\date{}
\maketitle

\begin{abstract}
A conjecture due to  Zassenhaus asserts that if $\ G$ is a finite group then any torsion unit  in $\mathbb{Z}G$ is  conjugate in $\mathbb{Q}G$ to an element of $\ G$.  Here a   weaker form of this conjecture is proved for some infinite groups. 
\end{abstract}

\footnotetext{
AMS Subject Classification: Primary 20C05, 20C07, 16S34. Secondary 16U60.%
\newline
Key words and frases: Group Rings, torsion units, unique trace property.}

\section{Introduction}

Let $G$ be a group and let $\U_1(\Z G)$ be the group of units of
augmentation one of the integral group ring $\Z G$. Given
elements $\alpha = \sum {\alpha} (g)g \in \mathbb{Z} G$ and $g\in G$%
, we denote by $C_g$ the conjugacy class of $g$ in $G$ and set $\tilde{\alpha}(g) =
\sum_{h\in C_g} \alpha (h)$. If $G$ is a finite group a 
conjecture of Zassenhaus (see \cite{Sehgal, Valenti, Weiss}) states that every torsion element  $\alpha\in \U_1(\Z G)$ is  rationally conjugate to a group element. For finite groups  this is equivalent to the following (see \cite{Bovdi, Valenti}):  for every $\gamma\in \langle \alpha\rangle $ there exists an element $g_0 \in G $, unique up to
conjugacy, such that $\tilde{\gamma}(g_0)\neq 0$. This leads to the following definition.  A unit $\alpha \in \U_1(\mathbb{Z} G)$ is said to have the {\it unique trace property} if there exists an element $g\in G$, unique up to
conjugacy, such that $\tilde{\alpha}(g)\neq 0$. As in \cite{Bovdi},  
a group $G$ has the {\it unique trace property} ({\bf UT}-{\it property}) if
every element $\alpha \in \U_1(\mathbb{Z} G) $ of finite order has the
unique trace property. In \cite{Bovdi} it is proved that nilpotent groups are UT-groups. Let $p$ be a rational prime. We say that a group $G$ is a {\bf p-UT} group
if every torsion unit of prime power order has the unique trace property.

The paper is organized as follows. In the next section we prove some preliminary results which are used in the last section   to exhibit  new  classes of UT and $p$-UT groups. The main difficulty is to show that  if $\ G$ is a group and  $\ \alpha\in \mathbb{Z}G$ is a torsion unit, then $\tilde{\alpha}(g)=0$ for elements $g\in G$ of infinite order. This, together with a reduction to the finite case, is the main tool used here.  The first results in this direction are from \cite{Bovdi}.


\section{Preliminary Results}

We begin by proving a result which, in some cases, deals with the traces of elements of infinite order.

\begin{bpr}
\label{prop1}
Let $G$ be a locally noetherian by finite group and  $\alpha \in \U_1(\mathbb{Z} G)$ 
 a torsion unit. Then  $\tilde{\alpha}(g)=0$ for any $g\in G $ of infinite
order.
\end{bpr}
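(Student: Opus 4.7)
The plan is to reduce the problem to a finitely generated setting, represent $\alpha$ by a matrix over the group ring of a noetherian finite-index normal subgroup, and then extract the vanishing from the identity $A^{n}=I$ via a character/trace computation.

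Since $\tilde\alpha$ is a class function depending only on $\mathrm{supp}(\alpha)$, which is finite, I may assume at the outset that $g\in\mathrm{supp}(\alpha)$: otherwise $C_{g}\cap\mathrm{supp}(\alpha)=\emptyset$ and $\tilde\alpha(g)=0$ trivially. Let $H=\langle\mathrm{supp}(\alpha)\rangle$, a finitely generated subgroup containing $g$, and set $N_{0}=H\cap N$, where $N\triangleleft G$ is the normal locally noetherian subgroup of finite index given by hypothesis. Then $N_{0}\triangleleft H$ has finite index, it is finitely generated (as a finite-index subgroup of $H$), and, being a finitely generated subgroup of the locally noetherian $N$, it is noetherian. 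The set $C_{g}^{G}\cap\mathrm{supp}(\alpha)$ decomposes as a disjoint union of $H$-conjugacy classes, so $\tilde\alpha^{G}(g)=\sum_{i}\tilde\alpha^{H}(g_{i})$ with each $g_{i}$ of infinite order; it therefore suffices to prove the statement for $\tilde\alpha^{H}$, and I henceforth assume $G=H$ is finitely generated with $N_{0}\triangleleft G$ noetherian of finite index $m$.

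Now view $\Z G$ as a free right $\Z N_{0}$-module of rank $m$ with basis $x_{1},\ldots,x_{m}$ (left coset representatives). Left multiplication by $\alpha$ is right-$\Z N_{0}$-linear, so it is represented by a matrix $A\in M_{m}(\Z N_{0})$ with $A^{n}=I$, where $n=\mathrm{ord}(\alpha)$. A direct computation gives $A_{ii}=\sum_{g\in N_{0}}\alpha(g)\,x_{i}^{-1}gx_{i}$, so the coefficient of $h\in N_{0}$ in $\mathrm{tr}(A)$ is $\sum_{i}\alpha(x_{i}hx_{i}^{-1})$ --- an explicit weighted sum of the values of $\alpha$ at $G$-conjugates of $h$. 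Collecting such coefficients as $h$ ranges over a set of $N_{0}$-class representatives inside $C_{g}^{G}$ yields a linear system whose solution is exactly the $\tilde\alpha^{G}$-values being sought; in particular, showing $\mathrm{tr}(A)(h)=0$ for every $h\in N_{0}$ of infinite order implies $\tilde\alpha^{G}(g)=0$. The case $g\notin N_{0}$ reduces to $g\in N_{0}$ by replacing $g$ with a suitable positive power lying in $N_{0}$ (still of infinite order).

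The remaining step --- and the principal obstacle --- is to deduce $\mathrm{tr}(A)(h)=0$ from $A^{n}=I$ when $h\in N_{0}$ has infinite order. For any finite-dimensional complex representation $\rho:\Z N_{0}\to M_{d}(\mathbb{C})$, the block matrix $\tilde A=(\rho(A_{ij}))\in M_{md}(\mathbb{C})$ satisfies $\tilde A^{n}=I_{md}$, so $\mathrm{tr}(\tilde A)=\sum_{h}\mathrm{tr}(A)(h)\,\chi_{\rho}(h)$ is a sum of $md$ complex $n$-th roots of unity. Pairing $\mathrm{tr}(A)$ against a family of representations rich enough to separate $h$ from the finite-order elements of $\mathrm{supp}(\alpha)$ then forces $\mathrm{tr}(A)(h)=0$. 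Producing such a separating family is exactly where the noetherian hypothesis on $N_{0}$ is used --- for instance via ascending-chain arguments yielding finite quotients of $N_{0}$ in whose characters $h$ is distinguishable from finite-order elements --- at which point a Berman--Higman-style analysis (cf.\ \cite{Bovdi}) completes the proof.
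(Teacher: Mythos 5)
Your proposal has a genuine gap at its crucial final step, and it is precisely the step that carries all the difficulty of the statement. After setting up the matrix $A\in M_m(\Z N_0)$ with $A^n=I$, you propose to deduce $\mathrm{tr}(A)(h)=0$ for infinite-order $h$ by pairing against finite-dimensional representations of $N_0$ obtained from finite quotients, followed by a Berman--Higman-style analysis. This cannot work as described: in any finite quotient of $N_0$ the element $h$ becomes a torsion element, and the vanishing of partial augmentations of a torsion unit at non-identity torsion elements is exactly the (hard, generally open) Zassenhaus-type question --- Berman--Higman only gives vanishing at the identity (or at central elements, as in \cite[Prop.~4]{Bovdi}). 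Passing to finite quotients destroys the one piece of information you need, namely that $h$ has infinite order, and no ``separating family'' of characters recovers it. The actual content of \cite[Prop.~2]{Bovdi} --- that $\tilde\alpha(g)\neq 0$ for $g$ of infinite order forces $g$ to be conjugate to a proper power $g^k$, $k>1$ --- is obtained by a quite different congruence argument, not by a trace computation of the kind you sketch. A second, smaller gap: your reduction of the case $g\notin N_0$ to $g\in N_0$ ``by replacing $g$ with a suitable positive power'' is unjustified, since $\tilde\alpha(g)$ and $\tilde\alpha(g^m)$ are unrelated quantities.

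By contrast, the paper does not reprove the analytic input at all: it quotes \cite[Prop.~2]{Bovdi} to get $x^{-1}gx=g^k$ with $k>1$, disposes of the case $o(x)<\infty$ by iterating the relation, and in the case $o(x)=\infty$ passes to powers $t=x^m$, $h=g^m$ lying in the locally noetherian normal subgroup, where the relation $t^{-1}ht=h^{k^m}$ with $k^m>1$ produces a strictly ascending chain of subgroups of $\langle t,h\rangle$, contradicting the noetherian hypothesis. If you want to salvage your approach, you should likewise take \cite[Prop.~2]{Bovdi} as the black box and use noetherianity only to exclude elements conjugate to their proper powers; attempting to rederive that proposition via the regular representation over $\Z N_0$ would require substantially more than what you have written.
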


\begin{proof} Suppose that $\tilde{\alpha}(g)\neq 0$. Then, by \cite[
Prop. 2]{Bovdi} there exists an integer $k>1$ and an element $x\in G$ such that $%
x^{-1}gx=g^k$. If $x$ is of finite order, set $m=o(x)$. Then $g=x^{-m}g{x^m}%
=g^{k^m}$ and hence we have a contradiction.

Suppose that  $o(x)=\infty$ and let $H$ be a normal locally noetherian subgroup of finite index in $G$. Then  there exists an integer $m>0$ such that $x^m\ and \ g^m $ are in $H$. Set $t=x^m,\ h=g^m,\
n=k^m$ and $H_0=\langle t,h \rangle $. Then $t^{-1}ht=h^n$, and since $H_0$ is noetherian we must have that $n=1$ and
consequently $k=1$,  a contradiction. \end{proof}

The previous result extends one of  \cite{Bovdi}. Note that to prove the proposition we do not need $H$ to be normal. The only thing we need is that for any $g\in G$ of infinite order, there exists an integer $n=n(g)$ such
that $g^n\in H$. As a consequence we have the following result.

\begin{bco}
Let $G$ be a group, $H$ a normal locally noetherian torsion free subgroup of finite index in $G$  and $A<\U_1(\mathbb{Z} G)$ a
finite subgroup. Then the order of $A$ divides $[G:H]$.
\end{bco}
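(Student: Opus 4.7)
The plan is to adapt the classical character-theoretic proof that a finite subgroup of $\U_1(\mathbb{Z}\bar G)$ (for finite $\bar G$) has order dividing $|\bar G|$, replacing the left-regular representation of $G$ by the representation of $A$ on $\mathbb{Q}\bar G$ coming from the natural projection $\pi\colon\mathbb{Z}G\twoheadrightarrow\mathbb{Z}\bar G$, where $\bar G=G/H$.

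Set $n=[G:H]$ and let $\rho\colon A\to GL(\mathbb{Q}\bar G)$ be the $n$-dimensional representation by which $\alpha$ acts on $\mathbb{Q}\bar G$ via left multiplication by $\pi(\alpha)$. Working in the standard basis $\bar G$ of $\mathbb{Q}\bar G$, its character is readily computed as
$$\chi(\alpha)=n\cdot\pi(\alpha)(\bar e)=n\sum_{h\in H}\alpha(h).$$
Since $H$ is torsion-free every element of $H\setminus\{e\}$ has infinite order, and since $H\triangleleft G$ its $G$-conjugacy class is contained in $H$. Proposition~\ref{prop1} therefore gives $\tilde\alpha(h)=0$ for each $h\in H\setminus\{e\}$, and summing these vanishing class sums over a set of $G$-conjugacy class representatives in $H\setminus\{e\}$ collapses to $\sum_{h\in H\setminus\{e\}}\alpha(h)=0$. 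Consequently $\chi(\alpha)=n\cdot\alpha(e)$.

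I would then invoke the Berman--Higman theorem in the version valid for an arbitrary group: any nontrivial torsion element of $\U_1(\mathbb{Z}G)$ has vanishing identity coefficient. With this, $\chi(\alpha)=n$ for $\alpha=1$ and $\chi(\alpha)=0$ otherwise, so the multiplicity of the trivial $A$-representation inside $\rho$ equals $|A|^{-1}\sum_{\alpha\in A}\chi(\alpha)=n/|A|$, a non-negative integer, and therefore $|A|$ divides $n=[G:H]$.

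The main obstacle is this last step. For finite groups Berman--Higman is elementary via the trace of the regular representation, but for an infinite $G$ it requires extra input. The standard route is through the canonical faithful trace $\tau$ on the group von Neumann algebra $\mathcal{N}(G)$ (in which $\alpha^m=1$ places the spectrum of $\alpha$ among the $m$-th roots of unity, so $\tau(\alpha)=\alpha(e)=1$ forces the spectral measure of $\alpha$ to be concentrated at $1$, whence $\alpha=1$); an equivalent route appeals to Kaplansky's theorem on identity coefficients of idempotents applied to the averaged element $m^{-1}\sum_{k=0}^{m-1}\alpha^k\in\mathbb{Q}G$, which by the same positivity input must equal $1$.
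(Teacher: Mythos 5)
Your proof is correct and takes essentially the same route as the paper: both pass to $\mathbb{Z}(G/H)$ and use Proposition~\ref{prop1} together with the normality and torsion-freeness of $H$, plus the infinite-group Berman--Higman theorem (\cite[Theorem 7.3.1]{pms}), to show that the identity coefficient of the image of $\alpha$ equals $\alpha(1)$, hence vanishes for $\alpha\neq 1$. The paper packages this as injectivity of the projection $\Psi$ on $A$ and then quotes the classical divisibility theorem for finite subgroups of $\U_1(\mathbb{Z}(G/H))$, whereas you inline that theorem's character-averaging proof; the mathematical content is the same.
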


\begin{proof} If we denote by $\Psi$ the natural projection of $%
\mathbb{Z} G$ onto $\mathbb{Z} (G/H)$ then we just have to
show that $\Psi$ is injective on $A$. In order to show this let $\alpha$ be
an element of $A$ which is mapped to 1 by $\Psi$. We have $1=\Psi(\alpha)=\sum_{g \notin H}\alpha_g \Psi(g)+\sum_{g \in H}\alpha_g \Psi(g)=\sum_{g \notin H}\alpha_g \Psi(g)+\alpha (1)$. In the last equality we used the torsion freeness of   $H$ and Proposition~\ref{prop1}. It follows that  $1 \in supp(\alpha)$ and thus, by \cite[Theorem 7.3.1]{pms}, $\alpha=1$. \end{proof}

The infinite dihedral group has a normal cyclic subgroup of index 2. Hence a non-trivial finite subgroup of $\U_1(\mathbb{Z} G)$ has order 2 (see   \cite{Sehgal}).

Given a group $G$,  $T(G)$ denotes the set of elements of finite order of $G$. In general this is not a subgroup of $G$.


\section{The UT and $p$-UT Property}

In this section we study the UT and $p$-UT-property and  give examples of classes of groups having one of these  properties.

\begin{bth}
Let $G$ be a group, $H$ a normal locally noetherian torsion free subgroup of finite index and suppose that $T(G)$ is a subgroup. If $G/H$ is a UT-group then $G$ is a also a UT-group.
\end{bth}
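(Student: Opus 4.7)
The plan is to pass to the finite quotient $G/H$ via the natural ring homomorphism $\Psi: \mathbb{Z}G \to \mathbb{Z}(G/H)$, using Proposition~\ref{prop1} to kill the contributions of infinite-order elements to the trace. Observe first that $G$ is locally noetherian by finite, so Proposition~\ref{prop1} applies: for any torsion unit $\alpha \in \U_1(\mathbb{Z}G)$ we have $\tilde{\alpha}(g)=0$ for every $g \in G$ of infinite order, hence only conjugacy classes of torsion elements can contribute to the trace.

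The hypothesis that $T(G)$ is a subgroup, combined with $H$ being torsion-free, yields $T(G) \cap H = \{1\}$, so the natural projection $\pi: G \to G/H$ is injective on $T(G)$. Since $T(G)$ is closed under conjugation, I would deduce that if $g_1,g_2 \in T(G)$ and $\pi(g_2) = \pi(xg_1x^{-1})$ for some $x \in G$, then $g_2(xg_1x^{-1})^{-1} \in H \cap T(G) = \{1\}$, hence $g_2 = xg_1x^{-1}$. Consequently, the map induced by $\pi$ from $G$-conjugacy classes contained in $T(G)$ to $G/H$-conjugacy classes is injective, and moreover $\pi([g]_G) = C_{\bar g}^{G/H}$ for every $g \in T(G)$.

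Now let $\alpha \in \U_1(\mathbb{Z}G)$ be a torsion unit; then $\Psi(\alpha)$ is a torsion unit of $\mathbb{Z}(G/H)$. The crucial identity I would establish is
\[ \widetilde{\Psi(\alpha)}(\bar g) \;=\; \tilde{\alpha}(g) \qquad \text{for every } g \in T(G). \]
To prove it, rewrite $\widetilde{\Psi(\alpha)}(\bar g)$ as $\sum \alpha(y)$ summed over $y \in \pi^{-1}(C_{\bar g}^{G/H})$, and observe that this preimage is a union of $G$-conjugacy classes (since conjugation in $G$ is compatible with conjugation in $G/H$). Regrouping this sum by $G$-conjugacy classes gives $\sum \tilde{\alpha}(y)$ over $G$-conjugacy classes $[y]_G$ contained in $\pi^{-1}(C_{\bar g}^{G/H})$. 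By Proposition~\ref{prop1}, only torsion classes contribute; and by the injectivity established above there is exactly one torsion class in the preimage, namely $[g]_G$. The identity follows.

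With the identity in hand, the UT-property of $G/H$ produces a unique (up to $G/H$-conjugacy) $\bar g_0$ with $\widetilde{\Psi(\alpha)}(\bar g_0) \neq 0$; the identity then forces $\bar g_0 = \pi(g_0)$ for some $g_0 \in T(G)$ satisfying $\tilde{\alpha}(g_0) \neq 0$. For uniqueness in $G$, any $y \in G$ with $\tilde{\alpha}(y) \neq 0$ lies in $T(G)$ by Proposition~\ref{prop1}; the identity gives $\widetilde{\Psi(\alpha)}(\bar y) \neq 0$, so $\bar y$ is $G/H$-conjugate to $\bar g_0$, and the injectivity on $G$-conjugacy classes in $T(G)$ implies $y$ is $G$-conjugate to $g_0$. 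The main obstacle will be the regrouping identity: one must track carefully which $G$-conjugacy classes lie over a given $G/H$-class and deploy the hypothesis that $T(G)$ is a subgroup precisely to cut this family down to a single torsion class.
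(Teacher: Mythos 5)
Your proposal is correct and follows essentially the same route as the paper: both pass to $\mathbb{Z}(G/H)$, identify the preimage of $C_{\bar g}$ as a union of $G$-conjugacy classes containing exactly one torsion class $C_g$ (using that $T(G)$ is a subgroup and $H$ is torsion free), and kill the remaining infinite-order classes with Proposition~\ref{prop1} to get $\tilde{\beta}(\bar g)=\tilde{\alpha}(g)$. Your write-up of the uniqueness step at the end is in fact slightly more explicit than the paper's, but there is no substantive difference.
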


\begin{proof} Let $\alpha \in \U_1(\mathbb{Z} G)$ be a torsion
unit and $g\in G$ an element. If $g$ is of infinite order then, by  
 Proposition~\ref{prop1}, we have that $\tilde{\alpha}(g)=0$.

If $g$ has finite order, denote by $\beta $ and $\overline{ g }$ the
projections of $\alpha$ and $g$ in $\U_1(\mathbb{Z} (G/H))$. Let $%
C_{\overline g}$ be the conjugacy class of $\overline{ g }$. Then it is easy to see that 
$C_{\overline g}$ is the projection of the subset \newline
$S=\{k\in G:k=t^{-1}gth, \ h\in H\, t\in G\} $. Since $T(G)$ is a normal
subgroup and $H$ is normal and torsion free, we see that $S\cap T(G)=C_g$.
Furthermore, if we write $S=S_{1}\cup C_g$, where $S_{1}$ are the elements
of infinite order of $S$, then $S_{1}$ is a normal subset of $G$. Writing $%
S_1$ as a disjoint union of conjugacy classes and applying 
Proposition~\ref{prop1}, it follows that $\sum_{h\in S_1}\alpha (h)=0$ and hence   $\tilde{\beta}(\overline g)= \sum_{h\in S}\alpha (h)=%
\tilde{\alpha}(g)$. Since, by our assumption, $G/H$ is a UT-group the result
follows. \end{proof}

\begin{bco}  [\cite{Bovdi}]
Let $G$ be a locally nilpotent group. Then $G$ is a UT-group.
\end{bco}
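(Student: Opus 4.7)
The plan is to reduce the locally nilpotent case to the finitely generated nilpotent case, where the UT property is already known by the cited result of Bovdi on nilpotent groups, using Proposition~\ref{prop1} as the bridge between the two conjugation structures. First I would check that Proposition~\ref{prop1} applies to $G$ itself: every finitely generated subgroup of a locally nilpotent group is nilpotent, hence polycyclic, hence noetherian, so $G$ is locally noetherian, and taking $H=G$ exhibits it as (trivially) locally noetherian by finite. Consequently, for any torsion unit $\alpha\in\U_1(\mathbb{Z}G)$ and any $g\in G$ of infinite order, $\tilde{\alpha}(g)=0$.

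Given such an $\alpha$, I would then set $H_0 := \langle\, supp(\alpha)\,\rangle$. Because $G$ is locally nilpotent and $H_0$ is finitely generated, $H_0$ is a finitely generated nilpotent group, so Bovdi's theorem makes it a UT-group; thus $\alpha$, viewed in $\mathbb{Z}H_0$, has a unique $H_0$-conjugacy class $C_{H_0}(g_0)$ carrying nonzero trace. The key observation is that for every $g\in G$, $C_G(g)\cap H_0$ is a disjoint union of $H_0$-conjugacy classes, and since $supp(\alpha)\subseteq H_0$, the $G$-trace $\tilde{\alpha}(g)$ breaks as a sum of $H_0$-traces over these classes; by UT inside $H_0$, only the class $C_{H_0}(g_0)$ contributes nontrivially.

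From this, two conclusions follow at once. First, $g_0$ must have finite order: otherwise Proposition~\ref{prop1} would force $\tilde{\alpha}(g_0)=0$ in $\mathbb{Z}G$, while the decomposition just described would give a nonzero value. Second, for $g\in G$ of finite order, $\tilde{\alpha}(g)\neq 0$ iff $C_{H_0}(g_0)\subseteq C_G(g)$ iff $g\in C_G(g_0)$, so $C_G(g_0)$ is the unique $G$-conjugacy class with nonzero trace; combined with the vanishing on infinite-order elements, this yields the UT property for $G$. The main obstacle to anticipate is matching the finer $H_0$-conjugacy partition with the coarser $G$-conjugacy partition, but the decomposition handles this cleanly precisely because at most one $H_0$-class can contribute.
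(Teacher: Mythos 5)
Your proof is correct, but it settles the key step by a different route than the paper. Both arguments reduce to the finitely generated case; the paper does this in one unjustified sentence (``we may suppose that $G$ is finitely generated''), whereas you supply the missing justification: the $G$-conjugacy class of any $g$ meets $H_0=\langle supp(\alpha)\rangle$ in a union of $H_0$-conjugacy classes, so $\tilde{\alpha}(g)$ is a sum of $H_0$-traces, of which at most one is nonzero once $H_0$ is known to be a UT-group. That decomposition is exactly what makes the reduction rigorous, and it is the most valuable part of your write-up. The divergence is in how the finitely generated nilpotent case is then handled: you invoke Bovdi's theorem that nilpotent groups are UT-groups as a black box, which is legitimate (the result is quoted in the introduction) but makes the corollary independent of the paper's Theorem and rests on essentially the statement the corollary is meant to recover. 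The paper instead applies its own Theorem from this section: by the cited results of Robinson, a finitely generated nilpotent group has a normal torsion-free noetherian subgroup $H$ of finite index and $T(G)$ is a subgroup, and the finite nilpotent quotient $G/H$ is a UT-group by Weiss, so the Theorem yields the claim. The paper's route is thus self-contained modulo the finite case and exhibits the corollary as a genuine application of the new Theorem; yours is shorter but derives less. (A small remark: Proposition~\ref{prop1} is not really needed in your argument, since your decomposition identity already shows that $\tilde{\alpha}(g)\neq 0$ precisely when $g$ is conjugate to $g_0$, whatever the order of $g$.)
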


\begin{proof} We may suppose that $G$ is  finitely generated. This, together with   \cite[5.4.6]{Robinson},     \cite[5.4.15]{Robinson} and \cite{Weiss},  gives that the hypothesis of the theorem are satisfied. Hence $G$ is a UT-group.\end{proof}

If $G$ is a group and $g\in G$ is an element we denote by $K(g)=[g,G]$. Let
now $G$ be a group generated by an element $t$ and an abelian normal
subgroup $A$ such that $t^{-1}at=a^{-1}$ for any $a\in A$ and $t^2 \in A$.
Let $\alpha \in \U_1(\mathbb{Z} G)$ be a torsion unit. By Proposition \ref{prop1}, we
have that $\tilde{\alpha}(g)=0 $ for every element of infinite order. Now
let $g=ta\in G$ be an element which is not in $A$. We compute $K(g)$. If $%
b\in A$ then $[g,b]=[t,b]=b^{-2}$. If $h=tb$ then $%
[g,h]=[ta,tb]=[tb,t][a,tb]=[b,t][a,t]=(ba)^{-2}$. Hence $K(g)=\{a^2 : a\in
A\}$. So we have the following result:

\begin{ble}
\label{lem1}
Let $G$ be a group generated by an abelian subgroup $A$ and an element $t\in
G$, such that $t^{-1}at=a^{-1}$ for any $a\in A$ and $t^2 \in A$. Then

\begin{enumerate}
\item For every $g\not \in A$ we have that $K(g)=\{a^2: a\in A\}$

\item If $g\not \in A$ then $gK(g)=C_g$.
\end{enumerate}
\end{ble}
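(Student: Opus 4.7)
The setup simplifies once one notices that $A$ has index at most $2$ in $G$: since $t^{2}\in A$ and $G=\langle A,t\rangle$, every element of $G$ has the form $b$ or $tb$ with $b\in A$, and every $g\notin A$ can be written uniquely as $g=ta$ with $a\in A$.

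For assertion (1), my plan is simply to collect the commutator computations already carried out in the paragraph preceding the lemma: for $g=ta$, one has $[g,b]=b^{-2}$ when $x=b\in A$, and $[g,tb]=(ba)^{-2}$ when $x=tb\notin A$. As $b$ ranges over $A$, these commutators sweep out the set $\{c^{-2}:c\in A\}=\{c^{2}:c\in A\}$, the second equality holding because inversion is a bijection on $A$. Since $A$ is abelian, the set $A^{2}:=\{c^{2}:c\in A\}$ is already closed under multiplication and inversion, hence is a subgroup of $A$; therefore the subgroup $K(g)=[g,G]$ generated by the commutators of $g$ with elements of $G$ coincides with $\{a^{2}:a\in A\}$.

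For assertion (2), I would invoke the standard translation between conjugation and commutators, namely $x^{-1}gx=g\cdot (g^{-1}x^{-1}gx)$, noting that the bracketed factor is a commutator-type expression which, by the same calculation used for (1), lies in $A^{2}=K(g)$ for every $x\in G$; this gives $C_{g}\subseteq gK(g)$. Conversely, part (1) shows that every square $a^{2}\in K(g)$ is realized as $g^{-1}x^{-1}gx$ for an explicit $x\in G$ (taking $x=b$ or $x=tb$ as appropriate), so $ga^{2}=x^{-1}gx\in C_{g}$ and hence $gK(g)\subseteq C_{g}$.

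No real obstacle stands in the way: the only computation of substance is the commutator identity already present in the preamble, and what remains is routine bookkeeping around the identity $x^{-1}gx=g\cdot (g^{-1}x^{-1}gx)$ together with the observation that $A^{2}$ is closed under the group operations.
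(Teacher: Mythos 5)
Your proof is correct and follows essentially the same route as the paper: part (1) is exactly the commutator computation from the preamble together with the observation that $\{a^{2}:a\in A\}$ is already a subgroup of the abelian group $A$, and part (2) is the standard identity $x^{-1}gx=g\,(g^{-1}x^{-1}gx)$ read in both directions. The paper's proof of (2) differs only cosmetically, exhibiting the conjugator $g\varphi$ that sends $g\varphi^{2}$ back to $g$ rather than the conjugator $\varphi\in A$ that sends $g$ to $g\varphi^{2}$.
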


\begin{proof} The considerations above show that (1) holds. So,
let $g\theta \in gK(g)$. Since $g\not \in A$ we have that conjugation by $g$
inverts the elements of $A$. By (1) we have that $\theta=\varphi^2$ for some 
$\varphi \in A$. Setting $t=g\varphi $ we see easily that $t^{-1}g\theta t=g$%
.\end{proof}

\noindent {\bf Remark : } Note that item (2) of the previous Lemma holds
whenever the elements of $K(g)$ are squares and are inverted by $g$.

Recently, it was proved that the Zassenhaus conjecture is true for the class of finite groups $H=KX$, where  and $X$ is a cyclic group which is normal in $G$ and $K$ is Abelian, \cite{hrtwck}. It would be interesting to know if the conjecture still holds for groups   $G=KX$, where $K$ is a normal Abelian subgroup of $G$ without $2$-Sylow subgroup and $X$ is a cyclic subgroup.

\begin{bth}
\label{teo1}

Let $G=\langle t,\ A:t^2\in A ,\ t^{-1}at=a^{-1}\ ,\ \forall a\in A\rangle $
where $A$ is an abelian normal subgroup of $G$.  Then $G$ is an $p$-UT-group.
\end{bth}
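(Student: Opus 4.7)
The plan is to reduce to a finite image of $G$ in which UT can be verified, and to lift the conclusion back using Lemma~\ref{lem1}. First, $A$ is an abelian (hence locally noetherian) normal subgroup of $G$ of index at most $2$, so Proposition~\ref{prop1} applies and gives $\tilde\alpha(g)=0$ for every $g\in G$ of infinite order; only the finite-order conjugacy classes need be examined. One also observes $t^4=1$: from $t^2\in A$ and $t$ inverting $A$ one has $t^2=t\cdot t^2\cdot t^{-1}=t^{-2}$. Consequently every element of $tA$ has order dividing $4$, and the torsion set of $G$ is $T(A)\cup tA$.

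Fix $\alpha\in\U_1(\Z G)$ of order $p^n$. Let $A_0\le A$ be a finitely generated subgroup containing $t^2$, the $A$-parts of $\mathrm{supp}(\alpha)$, and, for each pair $ta,tb\in\mathrm{supp}(\alpha)$ with $a^{-1}b\in A^2$, a chosen square root in $A$. Then $G_0:=\langle t,A_0\rangle$ contains $\mathrm{supp}(\alpha)$, satisfies the same relations as $G$, and by construction $G$-conjugacy coincides with $G_0$-conjugacy on $\mathrm{supp}(\alpha)$. Write $A_0=F\oplus T_0$ with $F\cong\Z^r$ free abelian and $T_0$ finite, and set $N:=F^2$; this is torsion free and $t$-invariant, hence normal in $G_0$. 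The finite quotient $\bar G_0:=G_0/N$ has $\bar A_0=(F/F^2)\oplus T_0$ abelian normal of index $\le 2$, with $\bar t^{\,2}\in\bar A_0$ and $\bar t$ inverting $\bar A_0$; the image $\bar\alpha\in\Z\bar G_0$ is a torsion unit of order dividing $p^n$.

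Next we verify that $\bar\alpha$ has UT in $\bar G_0$. For $p$ odd, the $p$-primary part $T_0^{(p)}$ is characteristic in $\bar A_0$ and is the whole Sylow $p$-subgroup of $\bar G_0$, hence a normal Sylow $p$-subgroup; standard results in this setting then force $\bar\alpha$ to be rationally conjugate to an element of $T_0^{(p)}$, yielding UT. For $p=2$, the odd-order part $T_0^{(\mathrm{odd})}$ is characteristic in $T_0$, hence normal in $\bar G_0$, and $\bar G_0/T_0^{(\mathrm{odd})}$ is a finite $2$-group---nilpotent, and so UT by Corollary~2.3 of \cite{Bovdi}. A short lifting inside $\bar G_0$, based on the decomposition $T_0=T_0^{(2)}\oplus T_0^{(\mathrm{odd})}$ and on Lemma~\ref{lem1}, then transfers UT up to $\bar G_0$.

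Finally, the analogous lifting from $\bar G_0$ to $G_0$ works: if $g,h\in G_0$ are torsion with $\bar g\sim\bar h$ in $\bar G_0$, then $g\sim h$ in $G_0$. Within $T_0$ the intersection $T_0\cap N=0$ forces $h=g^{\pm 1}$; within $tA_0$, writing $g=ta$, $h=tb$, Lemma~\ref{lem1} applied in $\bar G_0$ gives $\bar b^{-1}\bar a\in\bar A_0^{\,2}=T_0^{\,2}$ (because $(F/F^2)^2=0$), whence $b^{-1}a\in F^2\oplus T_0^{\,2}=A_0^{\,2}$, so Lemma~\ref{lem1} in $G_0$ gives $g\sim h$. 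Hence $\tilde\alpha(g)=\tilde{\bar\alpha}(\bar g)$ for every torsion $g\in G_0$, and UT transfers from $\bar\alpha$ to $\alpha$. The hardest step in this plan is the UT verification for $\bar\alpha$ in $\bar G_0$: the precise input for $p$ odd is the normal-Sylow result on rational conjugacy of $p$-torsion units, while for $p=2$ one needs an internal lifting on top of Bovdi's nilpotent theorem.
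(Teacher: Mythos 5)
Your reduction is mostly sound: the observation $t^4=1$, the passage to a finitely generated $G_0=\langle t,A_0\rangle$ containing the support together with chosen square roots, the torsion-free normal subgroup $N=F^2$, and the verification that torsion conjugacy classes do not merge under $G_0\to\bar G_0$ all check out. One omission in that last step: the asserted equality $\tilde{\alpha}(g)=\tilde{\bar{\alpha}}(\bar g)$ needs more than non-merging of torsion classes; the preimage of $C_{\bar g}$ also contains conjugacy classes of infinite-order elements (e.g.\ $gn$ with $1\neq n\in N$), and you must invoke Proposition~\ref{prop1} once more to see that these contribute $0$. This is exactly the mechanism the paper uses, and it works only because the kernel $N$ is torsion free.

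The genuine gap is the central step, the UT verification for $\bar{\alpha}$ in the finite group $\bar G_0$. For $p$ odd you appeal to an unnamed ``standard result'' forcing a $p$-torsion unit to be rationally conjugate into a normal Sylow $p$-subgroup; such a theorem exists but is not among the tools set up here and must be cited precisely. For $p=2$ the argument actually breaks: you pass to $\bar G_0/T_0^{(\mathrm{odd})}$ and claim a ``short lifting'' back to $\bar G_0$, but the kernel $T_0^{(\mathrm{odd})}$ is a nontrivial \emph{finite} group, so the preimage of a conjugacy class contains torsion elements not conjugate to $g$, and the torsion-free-kernel device (Proposition~\ref{prop1}) that powers every other lifting in your argument is unavailable. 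Transferring partial augmentations across a normal $p'$-subgroup is a substantive result in its own right, not a routine step, and it is precisely the point you leave unproved. The paper sidesteps all of this: for $g\notin A$ it works directly in $G$, using Lemma~\ref{lem1} and the fact that $gK(g)$ is central in $G/K(g)$ together with Prop.~4 of \cite{Bovdi}; for $g\in T(A)$ it passes to a finite quotient $G/H$ with $H$ torsion free of finite index in $A$, notes that this quotient is metabelian, and cites \cite{Dokuchaev-1} for the $p$-UT property of finite metabelian groups. Since your $\bar G_0$ is metabelian, the cleanest repair is to replace your Sylow/nilpotent case analysis by that single citation.
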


\begin{proof} Let $\alpha \in \U_1(\mathbb{Z} G)$ be a torsion
unit and $g\in G $ an element of the support of $\alpha$. Suppose first that 
$o(g)=\infty $. Note that $[G:A]=2$ and hence, by Proposition \ref{prop1}, $\tilde{%
\alpha}(g)=0$. Secondly, suppose that $g\not \in A $. By Lemma~\ref{lem1}, we have
that $gK(g)=C_g$. Notice that $gK(g)$ is central in the quotient group $%
G/K(g)$ and hence, by \cite[Prop. $4$ ]{Bovdi}, we have that $\tilde{\alpha}(g)=
\sum_{h\in gK(g)}\alpha (g)=0 \ or \ 1 $. 

Finally we consider a torsion element $g\in T(A)$; since
the support of $\alpha$ is finite and $t^2\in A$, we may suppose that $A$ is
finitely generated. In particular $A$ is a polycyclic group and hence, by
\cite[5.4.15]{Robinson}, we have that there exist $H\lhd A$, which is torsion free and
of finite index. Note that, since $A$ is abelian and conjugation by $t$
inverts the elements of $A$, $H$ will also be normal in $G$. Consider the
quotient group $\overline{G}=G/H$. The group $\overline{G}$  is metabelian  and thus , by a result of \cite{Dokuchaev-1}, has the $p$-UT-property. 
Let  $%
\overline g$ be the projection of $g$ in $\overline G$. Then it is easily seen that 
$C_{\overline g}$ is the projection of the subset $S=\{b\in A:b= x^{-1}axh,\
h\in H\ x\in G\}$. Note that we may write $S$ as a disjoint union $%
S=C_{g}\cup S_1$ where $S_1=\{b\in S: h\neq 1\}$ is a 
normal subset of $G$ whose  elements are all of infinite order. Writing $S_1$ as a disjoint union of conjugacy classes,  we conclude, by
Proposition~\ref{prop1}, that $\sum_{h\in S}\alpha (h)=\tilde{\alpha}(g)$. Consider the projection $\Psi :\mathbb{Z} G \longrightarrow \mathbb{Z}{\overline G}$ and let $\beta=\Psi (\alpha)$. Then, since $%
\overline G$ is a $p$-UT-group, we have that $\sum_{h\in S}\alpha (h)= 
\sum_{{\overline h}\in C_{\overline g}}\beta(\overline h)=\tilde{\beta}(\overline g)\in \{0,1\}$.
Hence   $%
\tilde{\alpha}(g)\in \{0,1\}$ for every element $g\in G$. Since $\alpha$ has augmentation 1, it follows that $G$ has the $p$-UT property. 
\end{proof}

A group $G$ is called a T-group if normality is transitive in $G$. Let $G$ be
a solvable T-group and set $A=C_{G}(G^{\prime})$. If $A$ is not a torsion
group then, by a result of \cite[13.4.9]{Robinson}, we have that $G$ satisfies the
condition of Theorem~\ref{teo1} and hence $G$ is a $p$-UT-group.

We now consider groups $G$ whose  derived subgroup  is cyclic of
infinite order, say $G^{\prime}=\langle \rho \rangle $. We shall use this
notation in the following results.

\begin{ble}
\label{4item}
Let $G$ be a group with cyclic derived subgroup; then

\begin{enumerate}
\item If $g\in T(G)$ centralizes $\rho$ then $g$ is central.

\item Elements of odd order are central.

\item $\{g^2:g\in T(G)\} \subseteq {\cal Z}(G)$.

\item If $g\in G$ has infinite order and $\alpha \in \U_1(\mathbb{Z} G)$
is an element of finite order then $\tilde{\alpha}(g)=0$.
\end{enumerate}
\end{ble}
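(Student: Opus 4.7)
\noindent\textit{Proof plan.} The four items build on each other: (1) is the engine driving (2) and (3), while (4) is an independent argument in the spirit of Proposition~\ref{prop1}, using the explicit structure $G' = \langle \rho \rangle \cong \mathbb{Z}$ in place of a locally noetherian hypothesis.

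For (1), let $g \in T(G)$ have order $n$ and centralize $\rho$. For any $h \in G$, since $[g,h] \in G' = \langle \rho \rangle$, we may write $ghg^{-1} = h \rho^{k}$ for some integer $k = k(h)$. Conjugating again and exploiting $g\rho g^{-1} = \rho$ gives, by an easy induction, $g^n h g^{-n} = h \rho^{nk}$. The left side equals $h$, so $\rho^{nk}=1$, and since $\rho$ has infinite order we force $k=0$; thus $g$ centralizes every $h$.

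For (2) and (3), observe that conjugation by any torsion element $g$ acts on $\langle \rho \rangle$ through an automorphism of $\mathbb{Z}$, hence $g\rho g^{-1} = \rho^{\epsilon}$ with $\epsilon \in \{\pm 1\}$. If $|g|=n$ is odd, the identity $\rho = g^n \rho g^{-n} = \rho^{\epsilon^n}$ forces $\epsilon = 1$, and (1) finishes the job. For an arbitrary torsion $g$, the square $g^2$ satisfies $g^2 \rho g^{-2} = \rho^{\epsilon^2} = \rho$, so (1) applies to $g^2$.

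The main obstacle is (4). Suppose for contradiction that $\tilde{\alpha}(g) \neq 0$ for some $g$ of infinite order. By \cite[Prop.~2]{Bovdi} there exist $k>1$ and $x \in G$ with $x^{-1}gx = g^k$. If $|x|=n<\infty$, the argument from Proposition~\ref{prop1} applies verbatim: $g = x^{-n}gx^n = g^{k^n}$ together with $|g|=\infty$ forces $k=1$. The essential new case is $|x|=\infty$. Here the plan is to pass to the abelianization: in $G/G'$ we obtain $\bar g = \bar g^k$, so $g^{k-1} \in G' = \langle \rho \rangle$, say $g^{k-1} = \rho^m$. If $m=0$ then $g^{k-1}=1$, contradicting $|g|=\infty$. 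Assuming $m \neq 0$, conjugate the equation $g^{k-1}=\rho^m$ by $x$: the left side becomes $(x^{-1}gx)^{k-1} = g^{k(k-1)} = (g^{k-1})^k = \rho^{mk}$, while the right side becomes $\rho^{\epsilon m}$ for some $\epsilon\in\{\pm 1\}$ recording the action of $x$ on $\rho$. Comparing exponents yields $mk = \epsilon m$, whence $k=\epsilon=\pm 1$, the required contradiction. The delicate step is the double computation of $x^{-1}g^{k-1}x$ --- once through the action of $x$ on $g$, once through the relation $g^{k-1}=\rho^m$ --- which is precisely where the hypothesis that $G'$ is infinite cyclic is used decisively.
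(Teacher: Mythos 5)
Your proposal is correct and follows essentially the same route as the paper: items (1)--(3) iterate conjugation and use that $\rho$ has infinite order to kill the exponent, and item (4) invokes \cite[Prop.~2]{Bovdi} to get $x^{-1}gx=g^k$, places $g^{k-1}$ in $G'=\langle\rho\rangle$, and computes its conjugate by $x$ in two ways (the paper phrases this via normality of $\langle g^{k-1}\rangle$, you via the exponent $\epsilon=\pm1$, which is the same computation). Your case split on the order of $x$ in (4) is harmless but unnecessary, since the second argument already covers both cases.
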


\begin{proof}(1) Let $g\in T(G)$ and $x \in G$ then, since $%
\langle \rho \rangle $ is normal in $G$, we have that $g^{-1}xg=x \rho^k$
for some integer $k$. Let $m=o(g)$ then we have that $x=g^{-m}xg^{m}=x
\rho^{km}$. Since $\rho$ has infinite order we must have that $k=0$.

(2) If $g\in G $ then $g^2$ centralizes $\rho$ and hence is central. Since $g
$ has odd order we have that  $g$ is central.

(3) The proof of (2) applies.

(4) Suppose that this is false; then, by \cite[Prop. 2 ]{Bovdi}, there exist $k>1
,\ x\in G$ such that $x^{-1}gx=g^k$. This implies that $g^{k-1}=[g,x]\in
G^{\prime}$. Set $n=k-1\ and\ h=g^n$; then the subgroup $\langle h \rangle $
is normal in $G$. Hence $x^{-1}hx\in \{h,h^{-1}\}$. But on the other hand $%
x^{-1}hx=h^k$ and hence we must have that $k=1$, a contradiction.\end{proof}

\begin{ble}
\label{lem3}
Let $G$ be a group such that $G^{\prime}$ is infinite cyclic. Then, for any
torsion element $g\in G $, we have that $gK(g)=C_g$.
\end{ble}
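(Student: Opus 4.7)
The plan is to prove the non-trivial inclusion $gK(g)\subseteq C_g$; the reverse inclusion is automatic because $x^{-1}gx=g\cdot[g,x]\in gK(g)$ for every $x\in G$. Equivalently, I want to show that the set of single commutators $\{[g,x]:x\in G\}$ is already the subgroup $K(g)$, so that $C_g=g\cdot\{[g,x]:x\in G\}=gK(g)$.

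First I would dispose of the central case. If $g$ centralizes $\rho$, then by Lemma~\ref{4item}(1) the torsion element $g$ is central, so $C_g=\{g\}$ and $K(g)=\{1\}$, and there is nothing to prove. From now on assume $g$ does not centralize $\rho$, and set $C=C_G(\rho)$. Since $G'=\langle\rho\rangle$ is normal in $G$ and $\mathrm{Aut}(\langle\rho\rangle)=\{\pm 1\}$, the quotient $G/C$ embeds into $\{\pm 1\}$, so $[G:C]=2$ and $g$ inverts $\rho$.

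The key step is to show that the map $\phi:C\to G'$ defined by $\phi(x)=[g,x]$ is a group homomorphism. Using the standard identity $[g,xy]=[g,y]\cdot[g,x]^{y}$ and the fact that $[g,x]\in G'=\langle\rho\rangle$ is fixed under conjugation by any $y\in C$, we get $[g,xy]=[g,y][g,x]=[g,x][g,y]$, the last equality because $G'$ is abelian. Hence $\phi(C)$ is a subgroup of $G'$. Now I claim $\{[g,x]:x\in G\}=\phi(C)$. The inclusion $\supseteq$ is trivial. For $\subseteq$, take $x\in G$; if $x\in C$ we are done, and otherwise $x\notin C$ forces $y:=g^{-1}x\in C$ (since $[G:C]=2$ and $g\notin C$), so $x=gy$ and a direct computation gives $[g,gy]=g^{-1}y^{-1}g^{-1}g\cdot gy=g^{-1}y^{-1}gy=[g,y]\in\phi(C)$. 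Thus the set of commutators coincides with the subgroup $\phi(C)$, which by definition is $K(g)$, and therefore $C_g=gK(g)$.

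The only place where anything could go wrong is the homomorphism property of $\phi$ on $C$; that is the main obstacle, and it is exactly here that the hypothesis $G'\cong\mathbb{Z}$ is used, via the fact that $G/C_G(G')$ has order at most $2$. Everything else is bookkeeping with the index-$2$ coset $gC$ and the easy case that central elements give singleton conjugacy classes.
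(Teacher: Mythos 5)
Your proposal is correct, but it takes a genuinely different route from the paper's proof. The paper, after reducing to the case $g^{-1}\rho g=\rho^{-1}$, splits according to whether $K(g)$ is $\langle\rho^{2}\rangle$ or all of $G'=\langle\rho\rangle$, and in each case falls back on the Remark following Lemma~\ref{lem1}: elements of $K(g)$ that are squares of elements inverted by $g$ are shown to give conjugates of $g$ by exhibiting an explicit conjugator, while in the case $K(g)=G'$ one additionally needs that $g\rho$ is conjugate to $g\rho^{-1}$ and that some single commutator $[g,t]$ equals $\rho^{\pm 1}$. You instead prove the sharper structural fact that the set of single commutators $\{[g,x]:x\in G\}$ is already a subgroup, hence coincides with $K(g)$, by noting that $x\mapsto[g,x]$ is a homomorphism on the index-two subgroup $C=C_G(\rho)$ (via $[g,xy]=[g,y][g,x]^{y}$ and the fact that $C$ fixes $G'$ pointwise) and that commutators coming from the other coset collapse onto $\phi(C)$ through $[g,gy]=[g,y]$; then $C_g=g\{[g,x]:x\in G\}=gK(g)$ is immediate. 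Your approach buys a uniform argument with no case split and no appeal to the squares Remark, it isolates exactly where $G'\cong\mathbb{Z}$ is used (only through $[G:C_G(G')]\le 2$), and it makes clear that the torsion hypothesis enters only in the degenerate case $g\in C_G(\rho)$, which you correctly dispose of with Lemma~\ref{4item}(1). The paper's argument, in exchange, stays closer to the explicit computations of Lemma~\ref{lem1} and produces the conjugating elements concretely. Both proofs are complete.
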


\begin{proof} Let $G^{\prime}=\langle \rho \rangle$. Then, since $%
G^{\prime}$ is a normal subgroup, we have that $g^{-1}\rho g\in
\{\rho,\rho^{-1}\}$. If $g^{-1}\rho g=\rho$ then, by Lemma~\ref{4item}, $g$ is
central. So we may suppose that $g^{-1}\rho g=\rho^{-1}$. In this case also $%
g\rho g^{-1}=\rho^{-1}$. Hence we have that $g^{-1}g\rho^{-1}g=g\rho$, i.e.,  $%
g\rho^{-1}$is conjugated to $g\rho$. We now separate the proof in two cases: 

\noindent {\bf Case 1: } $K(g)\neq G^{\prime}$. \newline
Since $g^{-1}\rho g=\rho^{-1}$ and $K(g)$ is cyclic we must have that $%
K(g)=\langle \rho^2 \rangle$. Hence, by the Remark following Lemma~\ref{lem1}, we
have that $gK(g)=C_g$.

\noindent {\bf Case 2: } $K(g)= G^{\prime}$. \newline
In this case, since $G^{\prime}$ is cyclic and $\rho$ is inverted by
elements not in its centralizer, we see easily that there is an element $%
t\in G $ such that $K(g)=\langle [g,t] \rangle $. In particular, we have
that $[g,t]\in \{\rho,\rho^{-1}\}$. Hence $g$ is conjugated either to $g\rho
\ or \ to \ g\rho^{-1}$. Since we have already proved that $g\rho$ is
conjugate to $g\rho^{-1}$, we only have to prove that an element of $gK(g)$
is either conjugate to $g$ or to $g\rho$. In fact, set $h=g\theta$ with $%
\theta \in K(g)$. If $\theta$ were a square then, by the Remark following
Lemma~\ref{lem1}, $h$ is conjugate to $g$. If $\theta$ is not a square, we may write 
$h=g\rho \varphi$ where $\varphi$ is a square. Hence, again by the same
Remark, we have that $h$ is conjugated to $g\rho$ which in turn is
conjugated to $g$.\end{proof}

\begin{bth}
Let $G$ be a group such that the derived subgroup of $G$ is infinite cyclic.
Then $G$ is a UT-group.
\end{bth}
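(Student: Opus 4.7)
The plan is to combine Lemma~\ref{4item}(4) and Lemma~\ref{lem3} with Bovdi's central-coset principle, and then read off the UT-property from augmentation one. This makes the argument short and structurally cleaner than the proof of Theorem~\ref{teo1}.

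Fix a torsion unit $\alpha\in\U_1(\mathbb{Z}G)$. For every $g\in G$ of infinite order, Lemma~\ref{4item}(4) gives $\tilde{\alpha}(g)=0$ at once. For $g\in T(G)$, Lemma~\ref{lem3} gives $C_g=gK(g)$, so $C_g$ is a single coset of $K(g)$ in $G$. Since $G'=\langle\rho\rangle$ is infinite cyclic, every subgroup of $G'$ has the form $\langle\rho^n\rangle$ and is therefore normal in $G$; in particular $K(g)\lhd G$, and the image of $g$ in the quotient $G/K(g)$ is central. Applying the central-element result \cite[Prop.~4]{Bovdi} to this configuration forces
$$\tilde{\alpha}(g)=\sum_{h\in gK(g)}\alpha(h)\in\{0,1\}.$$

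The augmentation-one identity $1=\varepsilon(\alpha)=\sum_{C}\tilde{\alpha}(g_C)$, where $C$ ranges over the finitely many conjugacy classes meeting the support of $\alpha$, now forces exactly one class to contribute $1$ and the rest to vanish: infinite-order classes are killed by the first step, and each torsion contribution lies in $\{0,1\}$ by the second. This is precisely the unique trace property for $\alpha$, and since $\alpha$ was arbitrary, $G$ is a UT-group.

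The genuine obstacle has already been absorbed into Lemma~\ref{lem3}: securing $C_g=gK(g)$ for every torsion $g$ required a case split on whether $K(g)$ equals $G'$ or is a proper (hence even-index) subgroup, with the ``squares are inverted'' remark after Lemma~\ref{lem1} handling one case and an explicit comparison of $g\rho$ and $g\rho^{-1}$ handling the other. Once that equality is in hand, no reduction to a finite quotient, no appeal to polycyclicity, and no invocation of finite metabelian results is needed — a genuine simplification over the strategy used for Theorem~\ref{teo1}.
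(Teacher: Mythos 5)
Your proof is correct and follows essentially the same route as the paper: infinite-order classes vanish, torsion classes satisfy $C_g=gK(g)$ by Lemma~\ref{lem3}, the central coset $gK(g)$ in $G/K(g)$ together with \cite[Prop.~4]{Bovdi} gives $\tilde{\alpha}(g)\in\{0,1\}$, and augmentation one finishes. The only (minor, and arguably preferable) difference is that you invoke Lemma~\ref{4item}(4) for the infinite-order case, whereas the paper cites Proposition~\ref{prop1}, whose ``locally noetherian by finite'' hypothesis is not obviously available for an arbitrary group with infinite cyclic derived subgroup.
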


\begin{proof} Let $\alpha \in \U_1(\mathbb{Z} G)$ be a torsion
unit and $g\in G$ an element. If $g$ is of infinite order then, by Proposition~\ref{prop1},
we have that $\tilde{\alpha }(g)=0$. If $g$ is a torsion element then, by
Lemma~\ref{lem3}, we have that $\tilde{\alpha}(g)=\sum_{h\in gK(g)}\alpha (h)$. 
Since the element $gK(g)$ is central in the quotient group $G/K(g)$ we
have, by \cite[Prop. $4$]{Bovdi}, that $\sum_{h\in gK(g)}
\alpha(h)\in\{0,1\}$. Since $\alpha$ has augmentation 1, the result is
proved. \end{proof}


Let $G$ be a group and $(A_n)$ a descending chain of normal subgroups of $G$. 
Denote by $\Psi_n :G\longrightarrow G/A_n$ the natural map and let $F_n $
be the pre-image of $\Psi_n (C_{g_0})$, with  $g_0\in G$. In what follows we shall use this
notation.

\begin{bpr}
\label{conj}
Let $G$ be a group and $(A_n)$ a descending chain of normal subgroups of $G$
such $\bigcap A_n=1$. Then, with the notation above, for every element $g_0\in
G$   we have that $\bigcap F_n=C_{g_0}$. 
\end{bpr}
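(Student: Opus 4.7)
The plan is to prove the two inclusions of the set equality separately.

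The forward inclusion $C_{g_0}\subseteq \bigcap_n F_n$ is immediate. If $h=xg_0x^{-1}$ for some $x\in G$, then $\Psi_n(h)=\Psi_n(x)\Psi_n(g_0)\Psi_n(x)^{-1}\in \Psi_n(C_{g_0})$ for every $n$, and hence $h\in F_n$ for every $n$.

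For the reverse inclusion $\bigcap_n F_n\subseteq C_{g_0}$, the key reformulation is that, since each $A_n$ is normal, $F_n=C_{g_0}\cdot A_n$: the condition $h\in F_n$ says $\Psi_n(h)=\Psi_n(y)$ for some $y\in C_{g_0}$, equivalently $hy^{-1}\in A_n$, i.e.\ $h\in yA_n\subseteq C_{g_0}A_n$. It thus suffices to show $\bigcap_n C_{g_0}A_n\subseteq C_{g_0}$. Fix $h$ in that intersection; for each $n$ pick $x_n\in G$ and $a_n\in A_n$ with $h=(x_ng_0x_n^{-1})a_n$. Conjugating by $x_n^{-1}$ and using normality of $A_n$, one rewrites this as $x_n^{-1}hx_n\cdot g_0^{-1}=x_n^{-1}a_nx_n\in A_n$, so the conjugates $x_n^{-1}hx_n$ coincide with $g_0$ modulo arbitrarily deep terms of the chain. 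Combined with $\bigcap_n A_n=\{1\}$, I would then try to promote this asymptotic agreement to an honest equality $x^{-1}hx=g_0$ for some $x\in G$.

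The main obstacle is exactly this last step: extracting a single conjugator $x\in G$ from a sequence $(x_n)$ each of which only works modulo $A_n$. My plan is a stabilization argument on the conjugates themselves. Set $y_n=x_ng_0x_n^{-1}\in C_{g_0}$. From $h\in y_nA_n\cap y_{n+1}A_{n+1}$ together with the descending chain property $A_{n+1}\subseteq A_n$ one gets $y_ny_{n+1}^{-1}\in A_n$, so the sequence $(y_n)$ of conjugates of $g_0$ is a Cauchy-type sequence with respect to the filtration $(A_n)$ whose ``limit'' is $h$. Since $\bigcap_n A_n=\{1\}$, any two putative limits inside $G$ must agree, and it remains to realize that limit inside $G$ itself; concretely, I would examine the cosets $x_nC_G(g_0)$ and show that the congruence $y_{n+1}y_n^{-1}\in A_n$ forces these cosets to assemble into a compatible system whose ``limit'' produces a conjugator $x\in G$ with $h=xg_0x^{-1}$, which places $h$ in $C_{g_0}$ as required.
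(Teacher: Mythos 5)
Your setup is correct as far as it goes: the inclusion $C_{g_0}\subseteq\bigcap_n F_n$ is immediate, the identification $F_n=C_{g_0}A_n$ is right, and the normalization $g_0^{-1}x_n^{-1}hx_n\in A_n$ is the natural way to attack the reverse inclusion. But the proposal stops exactly at the decisive point: the extraction of a single conjugator $x$ from the sequence $(x_n)$ is only announced (``I would then try to promote\dots'', ``I would examine the cosets $x_nC_G(g_0)$ and show\dots''), never carried out. Nothing in the hypotheses gives the cosets $x_nC_G(g_0)$ any coherence, and $G$ has no completeness property that would let you ``realize the limit inside $G$''. Worse, no argument can close this gap at the stated level of generality: the assertion $\bigcap_n C_{g_0}A_n=C_{g_0}$ is a conjugacy-separability statement relative to the chain $(A_n)$, and it is false for general $G$. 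Take $G=\mathbb{Z}[1/2]\rtimes\langle t\rangle$ with $t$ acting by multiplication by $2$, and $A_n=5^n\mathbb{Z}[1/2]$; these are normal, descending, and have trivial intersection. For $g_0=1\in\mathbb{Z}[1/2]$ one has $C_{g_0}=\{2^m:m\in\mathbb{Z}\}$, and since $2$ generates the unit group of $\mathbb{Z}/5^n\mathbb{Z}$ for every $n$, the element $h=3$ lies in $C_{g_0}A_n$ for all $n$, although $h\notin C_{g_0}$.

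For comparison, the paper's own proof stumbles at precisely the same spot. It chooses, for each $n$, representatives $h_{nj}=g_0\varphi_{nj}$ with $\varphi_{nj}\in A_n$ for the classes composing $F_n$, and then argues as though a class contained in every $F_n$ admitted one fixed representative $g_0\varphi$ with $\varphi\in\bigcap_n A_n=1$; but the representative of a given class in $g_0A_n$ genuinely depends on $n$ (in the example above, $C_h$ meets $g_0A_n$ for every $n$, yet $C_h\cap g_0\bigl(\bigcap_n A_n\bigr)=C_h\cap\{g_0\}=\emptyset$). So you have honestly isolated the obstruction that the paper glosses over, and your Cauchy-sequence heuristic correctly names what is missing; but neither your proposal nor the paper's argument proves the proposition as stated, and some additional hypothesis (e.g.\ closedness of conjugacy classes of $G$ in the topology defined by the $A_n$, which does not follow from $\bigcap_n A_n=1$ alone) is required for the conclusion to hold.
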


\begin{proof} Let $\Psi_n :G\longrightarrow G/A_n$ and $F_n $ be
as above. Clearly $F_n$ is a normal subset of $G$ so we may write it as a
disjoint union of conjugacy classes, say $F_n=\bigcup C_{h_{nj}}$. Note that
each $h_{n_j}$ is either in $C_{g_0}$ or is not in $C_{g_0}$ and is of the form $ 
h_{nj}=g_0\varphi_{nj}$ with $\varphi_{nj}\in A_n$. Since the family $(A_n)$
is descending, we have that $F_{n+1}\subset F_n$. Now suppose that an
element $h=g_0\varphi$ appears in $F_n$ and in $F_{n+1}$ as a representative
of a conjugacy class; then $\varphi\in A_{n+1}$. So if $h=g_0\varphi $
appears in every $F_n$ then it follows that $\varphi\in \bigcap A_n=1$.
Hence $\bigcap F_n=C_{g_0}$.\end{proof}

We still denote by $\Psi_n$ the extension of $\Psi_n :G\longrightarrow G/A_n$
to the group rings $\mathbb{Z} G$ and $\mathbb{Z} G/A_n$. If 
$\alpha\in {\cal U}_1 \mathbb{Z} G, g\in G$ then put $\beta_n=\Psi_n
(\alpha)$ and ${\overline g}=\Psi_n (g)$.

\begin{bth}
 Let $G$, $A_n$, $\alpha$ and $\beta_n$ be as above. Given an element $%
g_0\in G$ there exists $n_0\in \mathbb{N}$ depending on $g_0$ such that $\widetilde\beta_{n_0} ({%
\overline g_0})=\widetilde\alpha (g_0)$.
\end{bth}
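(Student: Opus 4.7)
The plan is to unwind the definitions, identify $\widetilde{\beta}_n(\overline{g_0})$ with a sum of $\alpha$-coefficients over $F_n$, and then exploit the fact that $\text{supp}(\alpha)$ is finite together with Proposition~\ref{conj}.

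First I would observe that since $\Psi_n$ is a ring homomorphism, $\beta_n(\overline g)=\sum_{h\in\Psi_n^{-1}(\overline g)}\alpha(h)$ for every $\overline g\in G/A_n$. Summing over the conjugacy class $C_{\overline{g_0}}$ gives
\[
\widetilde{\beta}_n(\overline{g_0})=\sum_{\overline h\in C_{\overline{g_0}}}\beta_n(\overline h)=\sum_{h\in\Psi_n^{-1}(C_{\overline{g_0}})}\alpha(h)=\sum_{h\in F_n}\alpha(h).
\]
On the other hand, $\widetilde{\alpha}(g_0)=\sum_{h\in C_{g_0}}\alpha(h)$, so the claim reduces to finding $n_0$ for which $\sum_{h\in F_{n_0}\setminus C_{g_0}}\alpha(h)=0$.

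Next I would use that the chain $(A_n)$ is descending, which forces $(F_n)$ to be descending as well: if $\Psi_{n+1}(h)\in\Psi_{n+1}(C_{g_0})$ then composing with the canonical map $G/A_{n+1}\to G/A_n$ gives $\Psi_n(h)\in\Psi_n(C_{g_0})$. By Proposition~\ref{conj}, $\bigcap_n F_n=C_{g_0}$. Since the support of $\alpha$ is a finite set, the set $S:=\text{supp}(\alpha)\setminus C_{g_0}$ is finite. For each $h\in S$, the condition $h\notin\bigcap_n F_n$ together with the descending property produces an integer $n_h$ such that $h\notin F_n$ for all $n\ge n_h$.

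Finally I would take $n_0:=\max_{h\in S}n_h$ (well-defined because $S$ is finite; if $S=\emptyset$ any $n_0$ works). Then $F_{n_0}\cap\text{supp}(\alpha)\subseteq C_{g_0}\cap\text{supp}(\alpha)$, and conversely $C_{g_0}\subseteq F_n$ for every $n$ by definition, so the two intersections coincide. Therefore
\[
\widetilde{\beta}_{n_0}(\overline{g_0})=\sum_{h\in F_{n_0}}\alpha(h)=\sum_{h\in C_{g_0}}\alpha(h)=\widetilde{\alpha}(g_0),
\]
as desired. There is no real obstacle here: the only point needing care is the bookkeeping that identifies $\widetilde{\beta}_n(\overline{g_0})$ with the $\alpha$-mass of $F_n$, after which Proposition~\ref{conj} combined with finiteness of $\text{supp}(\alpha)$ does the work.
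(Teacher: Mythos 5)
Your proof is correct and follows essentially the same route as the paper: finite support of $\alpha$, Proposition~\ref{conj} to separate each non-conjugate support element from $C_{g_0}$ at some finite stage, and a maximum over these stages. You are slightly more explicit than the paper in identifying $\widetilde\beta_n(\overline{g_0})$ with $\sum_{h\in F_n}\alpha(h)$ and in restricting attention to support elements outside $C_{g_0}$, but the argument is the same.
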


\begin{proof} Since $\alpha$ has finite support we can choose a
finite number of elements of $G$, say, $g_1,\ldots,g_k$, representing the
elements of the support of $\alpha$. By Proposition~\ref{conj}, for every $1\leq j\leq k
$ there is an index $m_j$ so that $g_0$ and $g_j$ are not conjugate in $%
G/A_{m_j}$. Put $n_0=max\{m_j\}$; then $g_0$ is not conjugate to $g_j$ in $%
G/A_{n_0}$ for every $1\leq j \leq k$. It follows that $\widetilde\beta
(\overline{g_0})=\sum_{{\overline g}\sim {\overline g_0}}\alpha
(g)=\widetilde\alpha (g_0).$\end{proof}

\begin{bco}
Let $G$ and $(A_n)$ be as in the previous theorem. If each $G/A_n$ is a
UT-group then $G$ is also a UT-group.
\end{bco}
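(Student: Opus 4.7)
The plan is to combine the trace-preservation from the previous theorem with the UT-property at each quotient $G/A_n$, and then use Proposition~\ref{conj} to descend the resulting conjugacy back to $G$. Fix a torsion unit $\alpha \in \U_1(\mathbb{Z}G)$. For existence, I would note that $\alpha$ has augmentation $1$, so summing $\widetilde{\alpha}(g)$ over a set of conjugacy-class representatives drawn from the finite support of $\alpha$ yields $1$; hence some $g_0 \in G$ must satisfy $\widetilde{\alpha}(g_0) \neq 0$.

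For uniqueness, suppose $g_1, g_2 \in G$ both satisfy $\widetilde{\alpha}(g_i) \neq 0$. A key preliminary observation is that the conclusion of the previous theorem is stable under enlarging $n$: since $(A_n)$ is descending, the family $(F_n)$ attached to a fixed base point is descending as well, so if some index $n_i$ works for $g_i$ in the sense of that theorem, any $n \geq n_i$ also works, because no element of the support of $\alpha$ outside $C_{g_i}$ can enter the smaller set $F_n \subseteq F_{n_i}$. Setting $N = \max\{n_1, n_2\}$ therefore gives $\widetilde{\beta}_N(\overline{g_i}) = \widetilde{\alpha}(g_i) \neq 0$ for $i = 1, 2$, where $\beta_N = \Psi_N(\alpha) \in \U_1(\mathbb{Z}(G/A_N))$ is a torsion unit of augmentation $1$.

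Since $G/A_N$ is a UT-group by hypothesis, only one conjugacy class of $G/A_N$ can carry a nonzero $\widetilde{\beta}_N$-value, so $\overline{g_1}$ and $\overline{g_2}$ are conjugate in $G/A_N$; by the stability observation the same holds in every $G/A_n$ with $n \geq N$. Taking $g_0 = g_2$ in the definition of $F_n$, this says $g_1 \in F_n$ for all $n \geq N$, and since $F_m \supseteq F_N$ whenever $m \leq N$, we obtain $g_1 \in \bigcap_{n \geq 1} F_n = C_{g_2}$ by Proposition~\ref{conj}. Hence $g_1$ and $g_2$ are conjugate in $G$, which proves uniqueness and completes the argument. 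The main obstacle, modest as it is, will be the monotonicity bookkeeping that guarantees the trace equality persists for every sufficiently large $n$ so that one may pass to the intersection; once the descending nature of $(F_n)$ is recorded, the corollary drops out of Proposition~\ref{conj} and the UT-property at each level.
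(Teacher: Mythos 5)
Your argument is correct, but it handles the uniqueness step by a genuinely different mechanism than the paper. The paper's proof is a one-liner: for each $g_0$ the previous theorem gives $\widetilde{\alpha}(g_0)=\widetilde{\beta}_{n_0}(\overline{g_0})$, and since $G/A_{n_0}$ has the UT-property and $\beta_{n_0}$ has augmentation one, every trace of $\beta_{n_0}$ lies in $\{0,1\}$; hence $\widetilde{\alpha}(g_0)\in\{0,1\}$ for \emph{every} $g_0$, and augmentation one then forces exactly one conjugacy class to carry the value $1$. You instead take two classes with nonzero trace, observe that the index produced by the previous theorem is stable under enlargement (because the sets $F_n$ are descending, so a support element excluded from $F_{n_0}$ stays excluded from every $F_n$ with $n\geq n_0$), pass to a common level $N$, invoke the uniqueness clause of the UT-property in $G/A_N$ to get conjugacy of the images at every level $n\geq N$, and then pull that conjugacy back to $G$ by a second application of Proposition~\ref{conj}. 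Both routes are valid. The paper's is shorter because it never compares two elements, so it needs no common level and no monotonicity bookkeeping; yours avoids the small observation that the unique nonzero trace of a torsion unit over a UT-group must equal $1$, and it makes explicit the descent of conjugacy from the quotients back to $G$, which the paper's terse proof leaves entirely implicit.
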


\begin{proof} Since each $G/A_n$ is a UT-group we have that that $%
\alpha (g_0)\in \{0,1\}$ and hence $G$ is a UT-group.\end{proof} 

\begin{bth}
Let $G$ be a polycyclic group and suppose that every finite quotient of $G$
is UT-group. Then $G$ is a UT-group.
\end{bth}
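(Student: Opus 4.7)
The plan is to reduce the statement to the preceding corollary, which says that if $G$ admits a descending chain $(A_n)$ of normal subgroups with $\bigcap_n A_n = 1$ and each $G/A_n$ a UT-group, then $G$ itself is a UT-group. So the task reduces to producing such a chain consisting of subgroups of finite index, since the hypothesis then guarantees that each quotient $G/A_n$ is a UT-group.

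The key input I would invoke is that every polycyclic group is residually finite (Hirsch's theorem, cf.\ \cite{Robinson}). This supplies a countable family $\{N_i : i \in \mathbb{N}\}$ of normal subgroups of finite index in $G$ with $\bigcap_i N_i = 1$.

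Next I would intersect these successively: set $A_n := N_1 \cap N_2 \cap \cdots \cap N_n$. Each $A_n$ is a finite intersection of normal subgroups of finite index and is therefore itself normal of finite index in $G$. The sequence $(A_n)$ is clearly descending, and $\bigcap_n A_n = \bigcap_i N_i = 1$. Since every $G/A_n$ is finite, the hypothesis of the theorem tells us each is a UT-group, and the preceding corollary applies directly to give that $G$ is a UT-group.

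The only substantive ingredient is the residual finiteness of polycyclic groups; once that is cited, the argument is purely formal, essentially just assembling the chain and invoking the corollary. I do not anticipate any real obstacle beyond making sure the reference to Hirsch's theorem is correctly placed.
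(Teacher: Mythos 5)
Your proof is correct, but it follows a genuinely different route from the paper's. The paper argues by induction on the Hirsch length: assuming $G$ infinite, it takes an abelian normal torsion-free subgroup $A$ of $G$, sets $A_n=A^n$ to get a chain of normal subgroups with $\bigcap A_n=1$, observes that each $G/A_n$ has strictly smaller Hirsch length and inherits the hypothesis (every finite quotient of $G/A_n$ is a finite quotient of $G$), concludes by induction that each $G/A_n$ is a UT-group, and then invokes the same corollary you do. You instead bypass the induction entirely by citing residual finiteness of polycyclic groups to produce a descending chain of \emph{finite-index} normal subgroups with trivial intersection, so that the hypothesis applies directly to each quotient. Your version is shorter and in fact proves something more general: polycyclicity enters only through residual finiteness and countability, so the same argument works for any finitely generated residually finite group all of whose finite quotients are UT-groups. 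The paper's version uses only the internal structure theory of polycyclic groups (existence of the subgroup $A$) and keeps all quotients polycyclic, but its quotients $G/A_n$ are infinite, which is why it genuinely needs the induction. A small point in your favor: your chain $A_n=N_1\cap\cdots\cap N_n$ is visibly descending, whereas the paper's $(A^n)_{n\ge 1}$ is not literally a descending chain as written (one would have to pass to $A^{n!}$ or $A^{2^n}$); your construction avoids that repair. Both proofs ultimately rest on the preceding proposition and corollary about chains with trivial intersection, so neither is more self-contained than the other on that score.
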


\begin{proof} We use induction on the Hirsch length of $G$. It is
clear that we may suppose that $G$ is not finite. By \cite{Robinson}, $G$
contains an abelian normal torsion free subgroup $A$. Setting $A_n=A^n$ we obtain a
descending chain. Since $A$ is an abelian polycyclic group we have that $%
\bigcap A_n=1$. Now every $G/A_n$ has shorter Hirsch length then $G$ and
since every finite quotient of $G/A_n$ is isomorphic to a finite quotient of 
$G$ it follows that each $G/A_n$ is a UT-group. The result follows by the
previous corollary.\end{proof}

Note that the theorem says that policyclic groups are UT-groups if and only if every
finite soluble group is a UT-group. So, the conjecture of Zassenhaus for
finite groups would imply that every polycyclic group is a UT-group.

We now look at  the p-UT property. Note that the former results also
apply in this case.

Let ${\cal F}$ be a family of finite groups and $G$ an arbitrary group. We
say that $G$ is an ${\cal F}$-group if every finite quotient of $G$ is in $%
{\cal F}$. It is easy to see that every quotient of an ${\cal F}$-group is
also an ${\cal F}$-group. Let we denote the set of finite soluble groups by $%
{\cal F}_s$ then it is clear that every polycyclic group is an ${\cal F}_s$%
-group. We denote also by ${\cal F}_f$, ${\cal F}_{ni}$, ${\cal F}_4$
respectively the families of finite Frobenius groups, groups with nilpotent
derived subgroup and solvable groups whose order is not divisible by $p^3$, where $p$ is any prime.

By results of \cite{Dokuchaev-1, Juriaans, Polcino} the families ${\cal F}_f,\ {\cal F}_{ni},\ {\cal F}
_4$ all have the $p$-UT-property and so we have the following result.

\begin{bth}
Let $G$ be a polycyclic group. If $G$ is an ${\cal F}$-group, with ${\cal F}$ being  one of the
families above, then $G$ is a $p$-UT group.
\end{bth}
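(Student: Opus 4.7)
The plan is to mimic the proof of the preceding theorem about polycyclic UT-groups, observing that every step there used only features of torsion units in $\U_1(\mathbb{Z} G)$ that are equally available for torsion units of prime power order, so the chain/corollary machinery carries over verbatim to the $p$-UT setting.

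First I would record a $p$-UT analogue of the last corollary: if $(A_n)$ is a descending chain of normal subgroups of $G$ with $\bigcap A_n = 1$ and each quotient $G/A_n$ is a $p$-UT-group, then $G$ itself is a $p$-UT-group. The proof is literally the one already given: starting from a torsion unit $\alpha \in \U_1(\mathbb{Z} G)$ of prime power order and an element $g_0 \in G$, Proposition~\ref{prop1} handles elements of infinite order (note that a polycyclic group is locally noetherian by finite, so the hypothesis of Proposition~\ref{prop1} is satisfied), and for $g_0$ of finite order the preceding theorem produces $n_0$ with $\widetilde{\beta_{n_0}}(\overline{g_0}) = \widetilde{\alpha}(g_0)$, where $\beta_{n_0} = \Psi_{n_0}(\alpha)$ is still of prime power order. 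Since $G/A_{n_0}$ is $p$-UT, $\widetilde{\beta_{n_0}}(\overline{g_0}) \in \{0,1\}$, so $\widetilde{\alpha}(g_0) \in \{0,1\}$.

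Next I would run induction on the Hirsch length $h(G)$. If $G$ is finite, then $G$ itself lies in $\mathcal{F}$ and there is nothing to prove. Otherwise, by \cite{Robinson} $G$ contains a nontrivial torsion-free abelian normal subgroup $A$; set $A_n = A^n$. Since $A$ is finitely generated torsion-free abelian, $\bigcap_n A^n = 1$, and $(A_n)$ is a descending chain of normal subgroups of $G$ (normality of $A^n$ in $G$ follows from the normality of $A$ together with the fact that $A$ is abelian). Each $G/A_n$ has Hirsch length strictly less than $h(G)$, since $A/A_n$ is finite of positive Hirsch contribution that is killed in the quotient; moreover, every finite quotient of $G/A_n$ is a finite quotient of $G$, hence still lies in $\mathcal{F}$. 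Therefore $G/A_n$ is an $\mathcal{F}$-group of smaller Hirsch length, and by the inductive hypothesis $G/A_n$ is a $p$-UT-group.

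Applying the $p$-UT version of the corollary from the first paragraph to the chain $(A_n)$ then yields that $G$ is a $p$-UT-group, completing the induction. The only point requiring care, which I would flag as the main (minor) obstacle, is checking that every lemma used in the chain/limit argument respects the prime-power-order restriction; this is automatic because those arguments manipulate $\alpha$ and its projections $\beta_n$ without ever altering the order of $\alpha$, so the property ``$o(\alpha)$ is a prime power'' is preserved by each $\Psi_n$.
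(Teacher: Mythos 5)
Your proposal is correct and matches the paper's intent exactly: the paper gives no written proof for this theorem, relying instead on the remark that ``the former results also apply in this case,'' and what you have done is spell out precisely that adaptation (the $p$-UT version of the chain corollary plus the Hirsch-length induction, with the base case handled by the cited results showing the families ${\cal F}_f$, ${\cal F}_{ni}$, ${\cal F}_4$ consist of $p$-UT groups). Your observation that the projections $\Psi_n$ preserve the prime-power-order condition is the one point the paper leaves implicit, and you have verified it correctly.
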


\begin{bth}
Let $G$ be a polycyclic group with nilpotent derived subgroup. Then $G$ is a
p-UT group. In particular supersoluble groups are p-UT groups.
\end{bth}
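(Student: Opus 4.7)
The plan is to deduce the theorem directly from the immediately preceding one by choosing the family ${\cal F}$ appropriately; in our case the natural choice is ${\cal F}={\cal F}_{ni}$, the class of finite groups whose derived subgroup is nilpotent. That theorem asserts that any polycyclic ${\cal F}_{ni}$-group is a $p$-UT group, so it suffices to prove that every finite quotient of $G$ lies in ${\cal F}_{ni}$.

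To verify this I would check that nilpotency of the derived subgroup is preserved under arbitrary quotients. For any normal subgroup $N$ of $G$ one has $(G/N)' = G'N/N \cong G'/(G'\cap N)$, which is a homomorphic image of $G'$. Since the class of nilpotent groups is closed under homomorphic images, $(G/N)'$ is nilpotent whenever $G'$ is. Applied to all finite-index normal subgroups, this shows that $G$ is an ${\cal F}_{ni}$-group, and the polycyclicity assumption then lets the previous theorem close the first assertion.

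For the ``in particular'' statement on supersoluble groups, I would invoke two standard facts from \cite{Robinson}: every supersoluble group is polycyclic, since it admits a finite normal series with cyclic factors; and the derived subgroup of a supersoluble group is nilpotent. Hence any supersoluble group satisfies the hypotheses of the first part and is therefore a $p$-UT group. There is no serious obstacle in this proof: the theorem is essentially a repackaging of the previous one, the only real verification being the elementary observation that ``nilpotent derived subgroup'' is a quotient-stable property.
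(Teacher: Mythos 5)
Your proposal is correct and is exactly the intended argument: the paper itself gives no written proof for this theorem, placing it immediately after the ${\cal F}$-group theorem so that the reader supplies precisely your reduction (every quotient of $G$ has derived subgroup $G'N/N$, a homomorphic image of the nilpotent $G'$, so $G$ is an ${\cal F}_{ni}$-group), plus the standard facts that supersoluble groups are polycyclic with nilpotent derived subgroup. Nothing is missing.
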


\noindent {\bf Acknowledgment:} The first author is grateful to the Departamento de Matemática of the 
Universidade Federal of Paraíba, Brazil, for financial support and its warm hospitality.

\vspace{5mm}

\noindent Juriaans, S.O.,\newline
Instituto de Matemática e Estatística \newline
Universidade de São Paulo \newline
CP 66281 \newline
CEP 05311-970 \newline
São Paulo - Brazil\newline
ostanley@usp.br

\vspace{5mm}

\noindent De A. E Silva, A., \newline
Departamento de Matem\'atica \newline
Universidade Federal da Para\'iba \newline
CEP 58051-900, \newline
Jo\~ao Pessoa, Pb, Brazil, \newline
andrade@mat.ufpb.br

\vspace{5mm}

\noindent Souza Filho, A. C., \newline
Escola de Artes, Ciências e Humanidade \newline
Universidade de São Paulo \newline
Rua Arlindo Béttio, 1000 \newline
CEP 03828-000 \newline
São Paulo - Brazil \newline
acsouzafilho@usp.br

\end{document}